\documentclass[11pt]{amsart}
\usepackage[T1]{fontenc}
\usepackage{amsmath,amssymb}
\usepackage{mathtools}
\usepackage{enumerate}
\usepackage{amsfonts} 
\usepackage[hidelinks]{hyperref}
\usepackage{mathrsfs}
\usepackage{amsthm}
\usepackage{eufrak}
\usepackage{commath}
\usepackage{stackengine}
\usepackage{tikz}
\usepackage{xcolor}
\usepackage{comment}
\usepackage[left=1.1in,right=1.1in]{geometry}

\newcommand{\N}{\mathbb{N}}

\newcommand{\R}{\mathbb{R}}

\newtheorem*{theorem*}{Theorem}

\newtheorem{theorem}{Theorem}[section]

\newtheorem{corollary}[theorem]{Corollary}
\newtheorem{lemma}[theorem]{Lemma}

\title[Growth beyond exponent $3/2$ for convexity and iterated sum sets]{Growth beyond exponent $3/2$ for convexity and iterated sum sets}
\author{Michalis Kokkinos}
\address{Michalis Kokkinos, Institute of Analysis and Number Theory, TU Graz, Austria}
\email{mike\_kokkinos@hotmail.com}

\author{Oliver Roche-Newton}
\address{Oliver Roche-Newton, Institute of Analysis and Number Theory, TU Graz, Austria}
\email{o.rochenewton@gmail.com}
\date{}

\begin{document}

\begin{abstract} We prove that the bound
\[
\max \{ |16A|,|16f(A)| \} \gg_m |A|^{\frac{3}{2}+\frac{1}{162}}
\]
holds for any polynomial $f$ with degree $m \geq 2$ and any finite $A \subset \mathbb R$. This shows that the classical Jarn\'{i}k obstruction to growth beyond exponent $3/2$, which occurs for general strictly convex functions, cannot occur for polynomial functions.

\end{abstract}
\maketitle
\section{Introduction}

An important generalisation of the sum-product phenomenon is the idea that strictly convex or concave functions disrupt additive structure. There are many concrete statements which exhibit this general principle: for instance, it was proven by Elekes, Nathanson and Ruzsa \cite{ENR} that the bound
\[
|A+A|^2|f(A)+f(A)|^2 \gg |A|^5
\]
holds for any finite $A \subset \mathbb R$ and any strictly convex or concave function $f$. In particular, it follows that
\[
\max \{|A+A|,|f(A)+f(A)| \} \gg |A|^{5/4},
\]
and taking $f(x)= \log x$ immediately recovers a sum-product estimate of Elekes \cite{El}. Moreover, the proofs in \cite{El} and \cite{ENR} are essentially the same, utilising a single application of a variant of the Szemer\'{e}di-Trotter Theorem. For more on convexity and sum sets including several of the most up-to-date bounds for the many variants of these problems, see the work of Bloom \cite{Bl25}, Schoen and Shkredov \cite{ScSh} and Stevens and Warren \cite{StWa22}.

One can naturally consider this problem with more variables. We write $kB$ for the $k$-fold sum set of a set $B$. In \cite{ENR}, it was also shown that
\[
\max \{|kA|, |kf(A)| \} \gg |A|^{\frac{3}{2}-2^{-k}},
\]
holds for any finite $A \subset \mathbb R$ and any strictly convex/concave $f$,
and so we get a growth exponent arbitrarily close to $3/2$ as $k$ grows. At the cost of introducing a minus sign, the cleaner estimate
\begin{equation} \label{twosumsonediff}
\max \{|A+A-A|, |f(A)+f(A)-f(A)| \} \gg |A|^{3/2}
\end{equation}
was proven by Bradshaw \cite{Br}, building on earlier work of Hanson, Roche-Newton and Rudnev \cite{HRR}, which in turn developed an elegant and elementary ``squeezing" argument which had appeared in \cite{RSSS}. The main result of a very recent paper of Cushman, Demeter and Wu \cite{CDW} implies a variant of \eqref{twosumsonediff} with only sums, although a small quantitative loss is incurred. They combine incidence theoretic and harmonic analysis techniques to prove a near optimal bound for the three-fold additive energy of a point set on a convex curve, which implies that the bound
\begin{equation} \label{threesums}
\max \{|A+A+A|, |f(A)+f(A)+f(A)| \} \gg |A|^{3/2-\epsilon}
\end{equation}
holds for all $\epsilon >0$.

The exponent $3/2$ is a natural barrier for both these elementary and incidence-theoretic techniques, and it turns out that there is a good reason for this. A classical construction of Jarn\'{i}k \cite{Ja} shows that there exists a set $P$ of $\Theta(n^2)$ points in $[n^3] \times [n^3]$ on a piecewise linear convex function $f$. This $f$ can then be perturbed to give a strictly convex function $g$ which passes through the points of $P$. If we then define $A:=\{x:(x,y)\in P\}$, it follows that $A$ is a set of $\Theta(n^2)$ integers in $[n^3]$, and thus
\[
|A+ A| \ll |A|^{3/2}.
\]
Similarly, by defining $g(A):=\{y:(x,y)\in P\}$, we have
\[
\max \{|A+A|, |g(A)+ g(A)| \} \ll |A|^{3/2}.
\]
The same remains true for arbitrarily many sums and differences. That is, the Jarn\'{i}k construction yields an example whereby
\[
\max \{|kA-\ell A|, |kg(A)- \ell g(A)| \} \ll_{k,\ell} |A|^{3/2}.
\]
This example differentiates this problem from the sum-product problem over the integers, where unbounded growth for iterated sums and products was proven by Bourgain and Chang \cite{BoCh}. A similar result over the reals now follows by combining the work of Mudgal \cite{Mu} with the Gowers, Green, Manners and Tao \cite{GGMT} results for the weak polynomial Freiman-Ruzsa conjecture. The classical two-variable form of the sum-product conjecture over the reals was recently refuted in a remarkable paper of Bloom, Sawin, Schildkraut and Zhelezov \cite{BSSZ}, but it would still be a huge surprise if the exponent $\frac{3}{2}+c$ was not admissible for sum-products over the reals.

In this paper, we are interested in the problem of when a growth exponent beyond $3/2$ can be attained for convexity and sum sets. More concretely, we would like to answer the following question: for which strictly convex/concave functions $f$ do there exist absolute constants $k \in \mathbb N$ and $c>0$ such that the bound
\begin{equation} \label{3/2+c}
\max \{ |kA|, |kf(A)| \} \gg |A|^{\frac{3}{2}+c} 
\end{equation}
holds for all $A \subset \mathbb R$? Some results in this direction were given by Roche-Newton \cite{Ro}, where it was shown that \eqref{3/2+c} holds (with $k=16$ and $c=\frac{1}{162}$) provided that $f$ satisfies an additional technical condition; see the forthcoming Theorem \ref{core} for the precise statement. It was also verified in \cite{Ro} that this condition is satisfied for $f(x)= \ln x$ and $f(x)=\ln(e^x+ \lambda)$, thus proving that \eqref{3/2+c} holds for these choices of $f$.

The only examples that we are aware of for which \eqref{3/2+c} does not hold are those arising from the Jarn\'{i}k construction described above. Given that there is quite some choice for how to define the function $g$ therein, it is not obvious what the conjectured classification should be for the functions $f$ which satisfy \eqref{3/2+c}, and we cannot currently give a full answer to this question. Nonetheless, building on the work of \cite{Ro}, we are able to prove such a result for a broad and natural algebraic class of convex functions. Our main result is the following.
\begin{theorem} \label{thm:mainpoly}
    Let $f \in \mathbb R[x]$ be a polynomial of degree $m \geq 2$ and let $A \subset \mathbb R$ be a finite set. Then
    \[
    \max \{ |16A|, |16f(A)| \} \gg_m |A|^{\frac{3}{2}+ \frac{1}{162}}.
    \]
\end{theorem}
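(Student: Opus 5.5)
The plan is to deduce Theorem~\ref{thm:mainpoly} from the forthcoming Theorem~\ref{core}, the result of \cite{Ro}. That theorem gives the bound $\max\{|16A|,|16f(A)|\}\gg |A|^{3/2+1/162}$ for any strictly convex/concave $f$ satisfying an additional technical condition. So the work is to check that this condition holds for polynomials of degree $m\ge 2$, with constants depending only on $m$, and to reduce to a setting where $f$ is strictly convex or concave.

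\textbf{Reduction to monotone, convex pieces.} Since $f'$ and $f''$ are polynomials of degree at most $m-1$, the real line splits into at most $2m-1$ intervals on each of which $f$ is monotone and either strictly convex or strictly concave. By pigeonhole, some interval $I$ contains a subset $A'=A\cap I$ with $|A'|\ge |A|/(2m)$. Since $|16A|\ge|16A'|$ and $|16f(A)|\ge|16f(A')|$, it suffices to prove the bound for $A'$, losing only a factor depending on $m$. We may also compose with affine maps on either side (replacing $f$ by $\pm f(\pm x + a)+b$), as these preserve sumset sizes. This lets us normalise to, say, $f$ increasing and convex on $I\subset(0,\infty)$.

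\textbf{Verifying the technical condition.} Here I expect the condition in Theorem~\ref{core}, as in the $\ln x$ and $\ln(e^x+\lambda)$ examples of \cite{Ro}, to concern the family of difference functions $x\mapsto f(x+t)-f(x)$, or $f$ composed with inverses of such. It should require that these functions remain convex/concave with a bounded number of inflection points, or satisfy a nondegeneracy such as a bounded-multiplicity incidence/Szemer\'edi--Trotter type condition for the associated curve family. For a polynomial this is algebraic. For $t\neq0$, $\Delta_t f(x)=f(x+t)-f(x)$ is a polynomial of degree exactly $m-1$ with leading coefficient $m a_m t$. Its derivatives up to a fixed order have $O_m(1)$ real zeros, uniformly in $t$. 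Two distinct curves $y=\Delta_t f(x)+c$ and $y=\Delta_{t'}f(x)+c'$ meet in at most $m-1$ points unless they coincide, and coincidence forces $t=t'$, $c=c'$ once $m\ge2$. Hence any needed pseudo-line / bounded-intersection property holds with $O_m(1)$ constants, and any needed convexity holds after a further $O_m(1)$ subdivision and pigeonholing. The case $m=2$ needs separate care: there $\Delta_t f$ is linear and the curves are lines. I would handle it directly, since for $f(x)=x^2$ one can verify the condition by hand, or the energy estimates become Szemer\'edi--Trotter for lines.

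\textbf{Main obstacle.} The hard step is the uniformity of the verification: the condition must hold with constants independent of the shift parameters produced by $A$. I would go through the proof of Theorem~\ref{core} and record exactly which derived functions (shifted differences, compositions with $f^{-1}$ on $I$) must be convex or have controlled sign changes. Then I would bound their critical points using Bézout / the degree of the relevant algebraic expressions. Compositions with $f^{-1}$ are not polynomial, but their convexity conditions reduce via implicit differentiation to sign conditions on polynomials in $(x,t)$ of degree $O_m(1)$. Those sign conditions can be controlled for all but $O_m(1)$ exceptional values of $t$, and the exceptional values can be discarded at negligible cost.
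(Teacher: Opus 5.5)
You reduce to pieces where $f$ is strictly convex or concave just as the paper does. However, the verification step never identifies, let alone checks, the actual hypothesis of Theorem~\ref{core}. That hypothesis is: for \emph{every} $d>0$, the first four derivatives of $f_d^{-1}$, where $f_d(x)=f(x+d)-f(x)$, have at most $M$ zeros in total. Pseudo-line properties of the curves $y=\Delta_tf(x)+c$ are not what is needed. The count of zeros of derivatives of $\Delta_tf$ itself is also beside the point. Your remark about implicit differentiation points the right way, but you stop before the real content. By the inverse rule, the zeros of $(f_d^{-1})^{(2)},(f_d^{-1})^{(3)},(f_d^{-1})^{(4)}$ are those of $-f_d''$, $-f_d'''f_d'+3(f_d'')^2$ and $10f_d'f_d''f_d'''-(f_d')^2f_d''''-15(f_d'')^3$. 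Bounded degree gives $O_m(1)$ zeros only once you know these expressions are not \emph{identically} zero, and that is what has to be proved (it genuinely fails for $m=2$). The paper proves it by computing leading coefficients. For the last two expressions these are $a_m^2m^2(m-1)^2(m-2)(2m-3)d^2$ and $-a_m^3m^3(m-1)^3(m-2)(6m^2-17m+12)d^3$, which are nonzero for every integer $m\ge3$ and every $d>0$. Your fallback of discarding $O_m(1)$ exceptional shifts is not available: Theorem~\ref{core} demands the condition for all $d$, and anything weaker would mean reopening its proof.

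The case $m=2$ is a genuine obstruction, not something to verify by hand. Here $f_d$ is linear, so $(f_d^{-1})''\equiv0$, and Theorem~\ref{core} cannot be applied to $f$ at all. Your alternative, Szemer\'edi--Trotter for lines, comes with no argument for beating exponent $3/2$, which is the natural barrier for such incidence methods. The missing idea is to swap the roles of the set and its image:
\begin{enumerate}
\item Normalise $a>0$, split $X$ at the vertex $-b/(2a)$, and keep the larger half $X_i$.
\item Apply Theorem~\ref{core} to $A=g(X_i)$ with the matching inverse branch $f_\pm(x)=\frac{-b\pm\sqrt{b^2-4ac+4ax}}{2a}$, so that $f_\pm(A)=X_i$.
\item The inverse of $(f_\pm)_d$ is explicit, namely $h(x)=\frac{a^2x^4+(4ac-2ad-b^2)x^2+d^2}{4ax^2}$, and $h',h'',h''',h''''$ have at most two zeros in total.
\end{enumerate}
This yields $\max\{|16g(X)|,|13X|\}\gg|X|^{3/2+1/162}$, and $|13X|\le|16X|$ finishes the proof.
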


\subsection*{Notation}
Given variable quantities $X$ and $Y$, we write $X\ll Y$, or $Y\gg X$, to denote that there exists an absolute constant $c>0$ such that $X\le cY$. We may further write $X\ll_t Y$ if the implied constant depends on some parameter $t$.

The notation $R_f$ is used for the range of a function $f.$ 

\section{Proofs of the main results}
The main tool for this paper is the following corrected and slightly modified form of \cite[Theorem 3.1]{Ro}.

\begin{theorem}\label{core}
    Let $I\subseteq\R$ be an interval of real numbers and let $m\in\N$. Suppose
that $A\subseteq I$ is a finite set of reals and that $f : I \rightarrow\R$ is a strictly convex or concave function. For $d\in(0,\infty)$, let $I_d=\{x\in I:x+d\in I\}$ and define the function $f_d:I_d\rightarrow\R$ by the formula $f_d(x) = f(x+d)-f(x)$. Suppose that, for all $d \in (0, \infty)$, the first four derivatives of $f_d^{-1}:R_{f_d}\rightarrow I_d$ have a combined total of at most $M$ zeroes. Then 
\[
    \max \{|16A|, |13f(A)| \} \gg_M |A|^{\frac{3}{2} +\frac{1}{162}}.
    \]
\end{theorem}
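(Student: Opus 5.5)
We need to propose a proof of Theorem \ref{core}, the corrected form of \cite[Theorem 3.1]{Ro}. The plan follows the scheme of \cite{Ro}: a squeezing argument, with the derivative hypothesis used to control how the difference functions $f_d$ behave on short intervals.

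Write $|A|=n$ and suppose, for contradiction, that $|16A|,|13f(A)| \le K n^{3/2}$ with $K=n^{1/162}$ small. The first step is to locate a popular difference. By pigeonhole on $A-A$, or more efficiently using the bound on $|2A|$ via an energy or Cauchy--Schwarz argument, we find $d>0$ and a subset $A_d=\{a\in A: a+d\in A\}$ with $|A_d|\gg n^{1/2}/K$. Then $f_d(A_d)\subseteq f(A)-f(A)$. The key set to study is $A_d$ together with its image $f_d(A_d)$. Sums of these embed into iterated sum sets of $A$ and $f(A)$: for example, $kA_d + \ldots \subseteq kA$, and $kf_d(A_d)\subseteq kf(A)-kf(A)$. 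Plünnecke--Ruzsa converts the difference sets back into sum sets at polynomial cost in $K$, which explains the asymmetric counts $16$ and $13$.

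The second step applies the Elekes--Nathanson--Ruzsa / Szemerédi--Trotter incidence bound, or the squeezing argument of \cite{RSSS,HRR}, to the function $g=f_d^{-1}$ restricted to the set $f_d(A_d)$. This compares $|A_d + A|$-type sums with sums of $f_d(A_d)$. The point is that $f_d$ is itself convex or concave on intervals where $f_d'' = f''(x+d)-f''(x)$ has constant sign, and its inverse has a controlled number of inflection points and higher-order turning points. The hypothesis that the first four derivatives of $f_d^{-1}$ have at most $M$ zeros lets us split $R_{f_d}$ into $O(M)$ intervals on each of which $f_d^{-1}$ and its derivatives up to order four are monotone with fixed sign. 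On one such interval we keep a $1/O(M)$ proportion of $A_d$.

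The third step is to show that on each such interval a higher-order convexity estimate holds. We use the generalisation of \eqref{twosumsonediff} to functions whose first few derivatives are monotone, in the style of the higher convexity results of Hanson, Roche-Newton and Rudnev. This gives a bound of the shape $|k A'| \gg |A'|^{2-o(1)}$, or an energy bound, strong enough that the popular structure at scale $d$ forces growth. Combining this with $|A_d|\gg n^{1/2}/K$ and with the containments from the first step yields $K n^{3/2}\ge |16A|\gg n^{3/2}\cdot(\text{gain})$, where the gain is a power of $n$ once all losses are tallied. Optimising the exponents of $K$ produces the value $1/162$.

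The main obstacle is the bookkeeping in the third step. The higher convexity input requires sign conditions on $(f_d^{-1})', \dots,(f_d^{-1})^{(4)}$ simultaneously on the chosen piece. Each application of squeezing consumes sums, and these must be matched against the allowed $16$ and $13$ folds, which is presumably where \cite{Ro} needed correction. I would first redo the derivative chain rule for inverse functions to confirm that four derivatives suffice. I would then track, for each Plünnecke step, exactly which multiples of $A$ and $f(A)$ arise, so that the final inclusion lands in $16A$ and $13f(A)$ with the dependence on $M$ absorbed into the implied constant.
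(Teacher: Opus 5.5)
The paper does not reprove this statement. It imports the proof of \cite[Theorem 3.1]{Ro} verbatim, with $f_d$ restricted to $I_d$; the one substantive change is that four rather than three derivatives of $f_d^{-1}$ must be controlled. Measured against that argument, your outline names plausible ingredients but has a genuine gap: the step producing the gain is never specified, and the one quantitative input you state cannot produce it. With $|A'|\approx n^{1/2}$, a bound $|kA'|\gg|A'|^{2-o(1)}$ gives at most $n^{1-o(1)}$, far below $n^{3/2}$. Moreover, no such bound holds for an arbitrary set $A'$ (take an arithmetic progression). The higher convexity theorem of Hanson, Roche-Newton and Rudnev says: if $g',\dots,g^{(k)}$ are strictly monotone and $|B+B|\le K|B|$, then $|2^kg(B)-(2^k-1)g(B)|\gg|B|^{k+1-o(1)}$, with polynomial loss in $K$. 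The growth is for the image $g(B)$, and the input $B$ must have small doubling. Here you would take $g=f_d^{-1}$ on a piece of $R_{f_d}$ where $g^{(1)},\dots,g^{(4)}$ do not vanish, with $B\subseteq f_d(A_d)$, $|B|\approx n^{1/2}$ and $g(B)\subseteq A$. To beat $n^{3/2}$ you need $k+1>3$, i.e.\ $k=3$, which gives $|8A-7A|\ge|8g(B)-7g(B)|\gg|B|^{4-o(1)}K^{-O(1)}$, roughly $n^{2}$. With only three derivatives you get $k=2$ and $|B|^{3}\approx n^{3/2}$, exactly the barrier. So the number of derivatives is dictated by this exponent count, not by the inverse-function chain rule. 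Likewise, the $A$-side set $8A-7A$ (before passing to pure sums) comes from $2^3$ and $2^3-1$, not from fold bookkeeping.

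The missing idea is how to obtain such a $B$: a subset of $f_d(A_d)$ of size about $n^{1/2}$ whose doubling is only a small power of $n$. Your containment $kf_d(A_d)\subseteq kf(A)-kf(A)$ only places a set of size $n^{1/2}$ inside one of size about $n^{3/2}$, i.e.\ doubling up to $n$. Pl\"unnecke--Ruzsa cannot repair this, because its losses are polynomial in $|2f(A)|/|f(A)|$, which may be about $n^{1/2}$, not in your $K$. The structure has to come from the squeezing argument for $A$ and $f$ themselves, in the near-extremal regime. Roughly: write $a_1<\dots<a_n$, $d_i=a_{i+1}-a_i$, and take $f$ increasing on the relevant piece. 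Then the points $(a_j+d_i,\,f(a_j)+f_{d_i}(a_i))$ with $i<j$ and $d_i\le d_j$ are distinct and lie in the cell $(a_j,a_{j+1}]\times(f(a_j),f(a_{j+1}))$ of the grid $(2A-A)\times(2f(A)-f(A))$. If both of these sets have size close to $n^{3/2}$, a typical cell has only about $n^{1/2}$ available ordinates. Hence, for a popular gap value $d$, the set $B=f_d(\{a_i: i<j,\ d_i=d\})$ is a large subset of $Z_j=((2f(A)-f(A))\cap(f(a_j),f(a_{j+1})))-f(a_j)$. Since $Z_j+Z_j-Z_j+f(a_j)\subseteq 5f(A)-4f(A)$ and lies in a short window around $f(a_j)$, some pigeonholing over $j$ bounds the doubling of $B$ by a small power of $n$. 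Only with such a $B$ in hand can the higher-convexity step run. The exponent then comes from balancing these specific losses, plus a further loss when passing from sum-difference sets to $16A$ and $13f(A)$. As it stands, your outline identifies no set to which the higher-convexity estimate applies, so there is nothing yet to optimise.
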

We note here that the statement given above is slightly different from that which appeared in \cite{Ro}, where it was instead stated that the first $3$ derivatives of $f_d^{-1}$ have a combined total of at most $M$ zeroes. This was due to an error in \cite{Ro} which we found when preparing this paper. Fortunately, all of the applications of Theorem \ref{core} which appear in \cite{Ro} remain valid, as we have checked that the fourth derivative condition is still verified for all cases considered there. A smaller technical change above from the statement of \cite{Ro} is that we impose less restrictions on the interval $I$. The proof from \cite{Ro} applies verbatim to this situation, with the function $f_d$ restricted to $I_d$, allowing for this slight generalisation of the statement.

It is worth remarking here that different variants of the result stated above are available to be taken from \cite{Ro}. For instance, there are statements with a slightly bigger exponent which involve some plus and minus signs. Furthermore, by incorporating inequality \eqref{threesums} into the arguments, one can obtain a slight quantitative strengthening to the statement of Theorem \ref{core} above. We do not pursue this here.

In order to apply Theorem \ref{core}, we need to make some calculations regarding derivatives of the inverse function $f_d^{-1}$. In many cases it is impossible to express the inverse of $f_d$ explicitly, such as when $f$ is a polynomial of degree at least six. In those cases we use the Inverse Rule. Since $f$ is a strictly convex or concave function, $f_d$ is strictly monotone and hence invertible; since $f_d'$ never vanishes, the Inverse Function Theorem gives that $(f_d^{-1})^{(1)}$ exists. The Inverse Rule therefore implies that, upon using the convention $f_d^{-1}(y)=:x,$ we can express the derivatives of $f_d^{-1}$ as
\begin{align} 
\big(f_d^{-1}\big)^{(1)}(y)&=\frac{1}{f_d^{(1)}(x)}, \label{d1} \\
    \big(f_d^{-1}\big)^{(2)}(y)&=\frac{-f_d^{(2)}(x)}{\big(f_d^{(1)}(x)\big)^3},  \label{d2}\\ 
    \big(f_d^{-1}\big)^{(3)}(y)&=\frac{-f_d^{(3)}(x)f_d^{(1)}(x)+3\big(f_d^{(2)}(x)\big)^2}{\big(f_d^{(1)}(x)\big)^5},\text{ and} \label{d3} \\
    \big(f_d^{-1}\big)^{(4)}(y)&=\frac{10f_d^{(1)}(x)f_d^{(2)}(x)f_d^{(3)}(x)-\big(f_d^{(1)}(x)\big)^2f_d^{(4)}(x)-15\big(f_d^{(2)}(x)\big)^3}{\big(f_d^{(1)}(x)\big)^7}. \label{d4}
    \end{align}

    These derivatives are all defined on $R_{f_d}$, since $f_d'(x)$ is either positive or negative, for all $x\in I$, depending on whether $f$ is strictly convex or strictly concave, respectively. Observe that the first derivative of $f_d^{-1}$ does not vanish anywhere, and it can henceforth be disregarded from our considerations. Our goal is to show that for the functions $f$ we are interested in, none of the remaining three derivatives of $f_d^{-1}$ have more than a constant number of zeroes.

    The proof of Theorem \ref{thm:mainpoly} is split into two parts, as the case of quadratic polynomials needs to be handled separately. Polynomials of degree $m \geq 3$ are handled in the following result.

\begin{theorem}\label{generalpoly}
    Let $A$ be a finite set of reals. Let $f:\R\rightarrow\R$ be a polynomial function given by $f(x)= a_mx^m+a_{m-1}
x^{m-1}+\dotsc+a_1x+a_0\in\R[x]$ of degree $m\ge3$, and with $a_m\neq0$. Then, 
  \[
    \max \{|16A|, |13f(A)| \} \gg_m |A|^{\frac{3}{2} +\frac{1}{162}}.
    \]
\end{theorem}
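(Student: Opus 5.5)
Since a polynomial of degree $m\ge 3$ is not globally convex or concave, we first restrict to convexity intervals. The second derivative $f''$ is a nonzero polynomial of degree $m-2$, so it has at most $m-2$ real roots. These roots split $\R$ into at most $m-1$ open intervals $I_1,\dots,I_r$. On each $I_j$ the polynomial $f''$ has constant sign, so $f$ is strictly convex or strictly concave there. By pigeonhole, some $I_j$ satisfies $|A\cap I_j|\ge (|A|-(m-2))/(m-1)\gg_m |A|$; the roots of $f''$ themselves are discarded. Set $A'=A\cap I_j$. Since $|16A|\ge|16A'|$ and $|13f(A)|\ge|13f(A')|$, it suffices to apply Theorem \ref{core} to $A'\subseteq I:=I_j$ and $f|_{I}$. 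The loss of a factor depending on $m$ is absorbed into $\gg_m$.

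It remains to check the hypothesis of Theorem \ref{core}: for every $d>0$, the derivatives $(f_d^{-1})^{(k)}$, $k=2,3,4$, have in total $O_m(1)$ zeroes on $R_{f_d}$. The first derivative never vanishes, as noted above. The map $x=f_d^{-1}(y)$ is a bijection $R_{f_d}\to I_d$, so zeroes in $y$ correspond exactly to zeroes in $x\in I_d$ of the numerators in \eqref{d2}, \eqref{d3} and \eqref{d4}. The denominators are nonvanishing powers of $f_d'$. These numerators are
\[
f_d^{(2)},\qquad -f_d^{(3)}f_d^{(1)}+3(f_d^{(2)})^2,\qquad 10f_d^{(1)}f_d^{(2)}f_d^{(3)}-(f_d^{(1)})^2f_d^{(4)}-15(f_d^{(2)})^3,
\]
where $f_d(x)=f(x+d)-f(x)$ is a polynomial in $x$. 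Each numerator is a polynomial of degree at most about $3(m-3)$. If it is not identically zero, it has at most $O(m)$ real roots, uniformly in $d$, which gives $M=O(m)$.

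The main step is therefore to show that none of these three polynomials vanishes identically, for any $d>0$. I would do this by computing leading coefficients. We have $f_d(x)=m a_m d\,x^{m-1}+\dots$, so $f_d$ has degree exactly $m-1\ge2$ with leading coefficient $c=m a_m d\neq 0$. Write $n=m-1$. The leading term of $f_d^{(k)}$ is $c\,n(n-1)\cdots(n-k+1)x^{n-k}$.
\begin{itemize}
\item For the second-derivative numerator: since $n\ge2$, $f_d^{(2)}$ has leading coefficient $c\,n(n-1)\neq0$.
\item For the third: the coefficient of $x^{2n-4}$ is $c^2n^2(n-1)\big(3(n-1)-(n-2)\big)=c^2n^2(n-1)(2n+1)\neq0$.
\item For the fourth: the coefficient of $x^{3n-6}$ is $c^3n^3(n-1)$ times
\[
10(n-1)(n-2)-(n-2)(n-3)-15(n-1)^2=-6n^2-5n-1=-(2n+1)(3n+1),
\]
which is nonzero.
\end{itemize}
In the low-degree cases some of these expressions are constants, for instance $f_d^{(3)}\equiv0$ when $n=2$. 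The same formulas still apply: the terms that vanish drop out consistently, and the leading coefficients remain nonzero because of the factor $(n-1)$ and the fact that $n\ge2$.

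The main obstacle I expect is this identity check: making sure the leading terms genuinely do not cancel for every $m\ge3$, and handling the degenerate small-$m$ cases carefully. Once this is done, Theorem \ref{core} applies with $M\ll m$, which yields the claimed bound.
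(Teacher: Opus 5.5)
Your proposal is correct and follows essentially the same route as the paper: pigeonhole $A$ into an interval on which $f$ is strictly convex or concave, show that the numerators in \eqref{d2}--\eqref{d4} are not identically zero by computing their top coefficients, and then apply Theorem \ref{core}. The only flaw is arithmetic: in fact $3(n-1)-(n-2)=2n-1$ and $10(n-1)(n-2)-(n-2)(n-3)-15(n-1)^2=-6n^2+5n-1=-(2n-1)(3n-1)$, which match the paper's $2m-3$ and $-(6m^2-17m+12)$ with $m=n+1$ (your own $n=2$ check, e.g.\ $3(f_d^{(2)})^2=12c^2$, would have caught this); these are still nonzero for integers $n\ge 2$, so your conclusion stands.
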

\begin{proof}
    We start by observing that the function $f_d$ defined as indicated above is a polynomial function of degree $m-1$. The zeroes of $(f_d^{-1})^{(i)}$ occur precisely when the numerators of the expressions \eqref{d2}, \eqref{d3} and \eqref{d4} are zero. These numerators are polynomials with degree at most $m^3$, and hence they have at most $m^3$ zeroes unless they are identically zero. Hence, the technical condition on $f$ of Theorem \ref{core} is satisfied if we prove that none of the numerators of the derivatives in \eqref{d2}, \eqref{d3}, \eqref{d4} can be identically zero. 
    
    We consider the highest order term in the numerator of each of these three derivatives. For \eqref{d2}, the numerator is a polynomial of degree $m-3 \geq 0$, as required (note that in the case $m=3$ this is a non-zero constant polynomial). 

    The highest order term for the numerator in \eqref{d3} is 
     \begin{equation*}
        a_m^2m^2(m-1)^2(m-2)d^2\Big[-(m-3)+3(m-2)\Big]x^{2m-6}.
    \end{equation*}
    This vanishes for all $x\in \R$ precisely when $m-3=3(m-2)$, that is when $m=3/2$, which cannot occur. Lastly, we proceed to check for the non-vanishing of the numerator of \eqref{d4}. Here, the leading coefficient is \begin{equation*}
        \begin{split}
            a_m^3m^3(m-1)^3(m-2)d^3\Big[10(m-2)(m-3)-(m-3)(m-4)-15(m-2)^2\Big].
        \end{split}
    \end{equation*}
    We deduce that the leading coefficient is not zero, since this requires $6m^2-17m+12=0,$ that is $m\in\{3/2,4/3\}$. We have checked that $f$ satisfies the derivative conditions of Theorem \ref{core}. 
    
    We note that we cannot yet apply Theorem \ref{core} directly since the function $f$ as defined above is not strictly convex or concave. However, since $f''$ has degree $m-2$, the real line can be partitioned into $t$ intervals $C_i\subseteq\R$, for some $t\le m-1$, on which $f$ is strictly convex or concave. Then $|A|=\sum_{i=1}^t|C_i\cap A|$ and, by the Pigeonhole Principle, there exist $j\in\{1,\dotsc,t\}$ for which $|C_j\cap A|\ge|A|/t\gg_m|A|.$ We can now apply Theorem \ref{core} with the set of real numbers chosen to be the large intersection $C_j\cap A$ and with the strictly convex or concave function $f\big|_{C_j}$, since the relevant sumsets of the subset are contained in those of $A.$
\end{proof}

It remains to deal with the case of Theorem \ref{thm:mainpoly} when $f$ is quadratic. Observe that we cannot apply Theorem \ref{core} directly since $f_d$ is a linear polynomial function whose inverse has second derivative already identically zero. However, we essentially bypass this issue by showing that the inverse of the quadratic polynomial does satisfy the technical condition.
\begin{lemma}\label{inversequad}
    Consider $f_\pm(x)=\frac{-b\pm\sqrt{b^2-4ac+4ax}}{2a}$, with $a>0,$ defined on the maximal interval $I\subseteq\R$ that ensures $f_{\pm}$ is real-valued and smooth, that is $I=\Big(c-\frac{b^2}{4a},\infty\Big)$.
    Let $A\subseteq I$ be finite. Then
\[
    \max \{|16A|, |13f_\pm(A)| \} \gg |A|^{\frac{3}{2} +\frac{1}{162}}.
    \]
\end{lemma}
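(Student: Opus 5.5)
We apply Theorem \ref{core} directly to $f=f_\pm$ on $I=\big(c-\frac{b^2}{4a},\infty\big)$. First, the hypotheses other than the derivative condition hold. Writing $u=u(x)=b^2-4ac+4ax>0$ on $I$, we have $f_\pm(x)=\frac{-b\pm\sqrt{u}}{2a}$. Then
\[
f_\pm'(x)=\pm u^{-1/2}, \qquad f_\pm''(x)=\mp 2a\,u^{-3/2}.
\]
Since $a>0$, $f_+$ is strictly concave and $f_-$ is strictly convex on $I$. The remaining task is to verify, uniformly in $d>0$, that the numerators in \eqref{d2}, \eqref{d3}, \eqref{d4} for $f_d(x)=f_\pm(x+d)-f_\pm(x)$ have a bounded number of zeroes on $I_d=I$.

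\textbf{Rescaling.} It is convenient to change variables to remove the parameters. Put $s=\sqrt{u(x)}$ and $s'=\sqrt{u(x+d)}=\sqrt{s^2+4ad}$. Then
\[
f_d(x)=\pm\frac{s'-s}{2a}=\pm\frac{2d}{s'+s}.
\]
Zero counts are unaffected by affine reparametrisations of the domain and by scaling the function. So, after substituting $x$ in terms of $s$ and rescaling, one may reduce to a one-parameter-free model. Let $t=s/\sqrt{4ad}\in(0,\infty)$. Then $f_d$ is, up to affine changes of variable and a constant multiple, the function
\[
g(w)=\sqrt{w+1}-\sqrt{w}, \qquad w=t^2\in(0,\infty).
\]
Hence the number of zeroes of the first four derivatives of $f_d^{-1}$ is independent of $d$, $a$, $b$, $c$ and the sign choice. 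It suffices to show that $g^{-1}$ has derivatives of orders $2,3,4$ with finitely many zeroes.

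\textbf{Computing the inverse.} Here I will exploit the fact that $g$ is explicitly invertible. If $y=\sqrt{w+1}-\sqrt{w}$, then $1/y=\sqrt{w+1}+\sqrt{w}$, and subtracting gives
\[
2\sqrt w = \frac1y - y, \qquad\text{so}\qquad g^{-1}(y)=\frac{(1-y^2)^2}{4y^2}=\frac14\left(y^{-2}-2+y^{2}\right), \quad y\in(0,1).
\]
Its derivatives are Laurent polynomials:
\[
\left(g^{-1}\right)^{(k)}(y)=\tfrac14\left((-1)^k(k+1)!\,y^{-2-k}+\left(y^2\right)^{(k)}\right).
\]
For $k=2$ this is $\tfrac14(6y^{-4}+2)$, which never vanishes. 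For $k=3,4$ it is a nonzero multiple of $y^{-5}$ and $y^{-6}$ respectively, which also never vanishes. So the total number of zeroes is $0$, and Theorem \ref{core} applies with an absolute $M$, giving the claimed bound.

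\textbf{Main obstacle.} The main care is the rescaling step: checking that the reduction from $f_d^{-1}$ to $g^{-1}$ only involves affine maps in both variables. If it does, derivatives are merely multiplied by nonzero constants and zero sets move by affine maps. If instead the map $w=t^2$ between $x$ and $w$ introduces a nonaffine composition, I would avoid it by inverting directly. Solving $y=\pm\frac{s'-s}{2a}$ for $s$ gives
\[
s=\frac{4ad-(2ay)^2}{2\cdot 2a|y|},
\]
so $x=(s^2-b^2+4ac)/(4a)$ is again a Laurent polynomial in $y$ of the form $\alpha y^{-2}+\beta+\gamma y^2$ with $\alpha\gamma\ne0$. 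The same derivative computation then yields at most a bounded number of zeroes: each derivative of order $2,3,4$ becomes, after clearing $y^{-k-2}$, a nonzero polynomial of degree at most $8$.
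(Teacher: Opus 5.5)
Your proof is correct and is essentially the paper's argument: the paper also inverts $(f_\pm)_d$ explicitly, obtaining $h(y)=\frac{a}{4}y^2+\frac{4ac-2ad-b^2}{4a}+\frac{d^2}{4a}y^{-2}$ (exactly your Laurent form $\alpha y^{-2}+\beta+\gamma y^2$), and differentiates to find second, third and fourth derivatives $\frac{a^2y^4+3d^2}{2ay^4}$, $-\frac{6d^2}{ay^5}$, $\frac{30d^2}{ay^6}$, none of which vanish. Your normalisation to $g(w)=\sqrt{w+1}-\sqrt{w}$ is only a cosmetic difference, and the hedge in your last paragraph is unnecessary because $w=u(x)/(4ad)$ is already affine in $x$, so $f_d^{-1}(y)=d\,g^{-1}(y/\lambda)-\frac{b^2-4ac}{4a}$ with $\lambda=\pm\sqrt{d/a}$.
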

\begin{proof}
    We want to apply Theorem \ref{core} with the strictly convex or concave function being either $f_+$ or $f_-$. We proceed by a manual check of the number of possible zeroes among the first four derivatives of the inverse of $(f_\pm)_d$, where $d\in(0,\infty),$ which we denote by $h$. This inverse is given explicitly by\begin{equation*}
        h(x)=\frac{a^2x^4+4acx^2-2adx^2-b^2x^2+d^2}{4ax^2},
    \end{equation*}whence\begin{equation*}
        \begin{split}
            &h^{(1)}(x)=\frac{a^2x^4-d^2}{2ax^3};\\
            &h^{(2)}(x)=\frac{a^2x^4+3d^2}{2ax^4};\\
            &h^{(3)}(x)=\frac{-6d^2}{ax^5};\text{ and}\\
            &h^{(4)}(x)=\frac{30d^2}{ax^6}.
        \end{split}
    \end{equation*}
    These derivatives have a combined total of at most $2$ zeroes, and so an application of Theorem \ref{core} completes the proof. 
\end{proof}
Observe that $|13B|\le|16B|$ for every finite nonempty $B$, by adding three copies of a fixed element. The next result deals with the case $m=2$ in Theorem \ref{thm:mainpoly}, and thus completes its proof. We use the notation $X^2$ to denote the set of squares of a real finite set $X.$
\begin{corollary}\label{cor:quadpoly}
    Consider $g:\R\rightarrow\R\,;\; x\mapsto ax^2+bx+c\in\R[x]$, with $a\neq0$, and let $X$ be a finite set of reals. We have,
    \begin{equation}\label{quadraticpoly}       
    \max \{|16g(X)|, |13X| \} \gg |X|^{\frac{3}{2} +\frac{1}{162}}.
    \end{equation}
    In particular,
    \begin{equation*}
        \max\{|16X^2|,|13X|\}\gg|X|^{\frac{3}{2}+\frac{1}{162}}
    \end{equation*}
\end{corollary}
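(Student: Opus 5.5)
The plan is to reduce to Lemma \ref{inversequad} by a change of variables. The key observation is that the roles of $X$ and $g(X)$ swap: we set $A := g(X)$ and apply the lemma with $f_\pm$, an inverse branch of $g$, so that $f_\pm(A)$ recovers (a large part of) $X$.

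First I will normalise $a>0$. If $a<0$, replace $g$ by $-g$. This negates $g(X)$, and $|16(-g(X))| = |16g(X)|$, so nothing changes. Next, solving $y = ax^2+bx+c$ for $x$ gives
\[
x = \frac{-b \pm \sqrt{b^2-4ac+4ay}}{2a} = f_\pm(y),
\]
so $f_+$ and $f_-$ are the two inverse branches of $g$ on either side of the vertex $x_0=-b/(2a)$. Split $X$ into $X_+ = \{x\in X : x> x_0\}$ and $X_- = \{x \in X: x< x_0\}$, ignoring the at most one point $x_0$. By pigeonhole, one of these, say $X_\epsilon$, has $|X_\epsilon|\ge (|X|-1)/2 \gg |X|$; the case of very small $|X|$ is trivial.

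On $X_\epsilon$, the map $g$ is injective and $f_\epsilon(g(x)) = x$. Put $A := g(X_\epsilon)$. Then $A$ is a finite subset of $I=(c-\frac{b^2}{4a},\infty)$, since points $x\neq x_0$ map strictly above the minimum value of $g$. Moreover $|A| = |X_\epsilon|$ and $f_\epsilon(A) = X_\epsilon$. Lemma \ref{inversequad} then gives
\[
\max\{|16A|,|13X_\epsilon|\} \gg |A|^{\frac32+\frac1{162}} \gg |X|^{\frac32+\frac1{162}}.
\]
Since $16A \subseteq 16g(X)$ and $13X_\epsilon\subseteq 13X$, this yields \eqref{quadraticpoly}. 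The "in particular" statement is the case $g(x)=x^2$.

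There is no real obstacle here. The steps needing care are:
\begin{itemize}
\item checking that the vertex point and the sign of $a$ cause no trouble;
\item checking that $A$ lies in the open interval $I$ on which Lemma \ref{inversequad} is stated, which holds because the excluded point $x_0$ is exactly the one mapping to the endpoint.
\end{itemize}
Finally, to conclude Theorem \ref{thm:mainpoly} for $m=2$, apply the corollary with $X=A$ and use $|13A|\le|16A|$.
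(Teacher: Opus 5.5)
Your proof is correct and follows the paper's argument essentially verbatim: normalise $a>0$, split $X$ at the vertex $-b/(2a)$, pigeonhole, and apply Lemma \ref{inversequad} to $A=g(X_\epsilon)$ with the matching branch $f_\pm$, so that $f_\pm(A)=X_\epsilon$. Your one deviation is discarding the vertex itself, whereas the paper places it in one half. This is slightly more careful, since $g(-b/(2a))=c-\frac{b^2}{4a}$ is the excluded endpoint of the open interval $I$ on which the lemma is stated.
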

\begin{proof}
    Replacing $g$ by $-g$ if necessary, we may assume $a>0$, since $|16(-g(X))|=|16g(X)|$. Towards proving the first statement, partition $X$ as $X=X_1\sqcup X_2$ where $X_1=X\cap(-\infty,-\frac{b}{2a}\big]$ and $X_2=X\setminus X_1$. If $|X_1|\ge|X|/2$, then take $A=g(X_1)$ and $f_-(x)=\frac{-b-\sqrt{b^2-4ac+4ax}}{2a}$ in Lemma \ref{inversequad}, otherwise take $A=g(X_2)$ and $f_+(x)=\frac{-b+\sqrt{b^2-4ac+4ax}}{2a}$. We observe that the partition of $X$ and the use of different branches of $f_\pm$ are necessary for $f_\pm(g(X))=X$ to occur, since for a given $x\in X$ we have $f_\pm(g(x))=\frac{-b\pm|2ax+b|}{2a}$. The second part follows by taking $a=1$ and $b=c=0$.
\end{proof}

\section{Further examples}

In this section, we show that the $3/2$ barrier for growth can be broken for some other natural families of convex functions. We begin by observing the following simple corollary of Theorem \ref{thm:mainpoly}.

\begin{corollary}
    Let $r\ge2$ be an integer and let $X$ be a finite set of positive reals. Then,
\[
    \max \{ |16X|, |16X^{1/r}| \} \gg_r |X|^{\frac{3}{2}+ \frac{1}{162}}.
    \]
\end{corollary}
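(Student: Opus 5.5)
The plan is to deduce this directly from Theorem \ref{thm:mainpoly} by a change of variables. Set $A:=X^{1/r}=\{x^{1/r}: x\in X\}$, where $x^{1/r}$ denotes the positive real $r$-th root. Since $X$ consists of positive reals, the map $x\mapsto x^{1/r}$ is injective on $X$, so $|A|=|X|$. Now take the polynomial $f(t)=t^r\in\mathbb R[t]$, which has degree $r\ge 2$. Then $f(A)=\{(x^{1/r})^r : x\in X\}=X$.

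Applying Theorem \ref{thm:mainpoly} to the finite set $A\subset\mathbb R$ and this $f$ gives
\[
\max\{|16A|,|16f(A)|\}\gg_r |A|^{\frac32+\frac1{162}}.
\]
Substituting $16A=16X^{1/r}$, $16f(A)=16X$ and $|A|=|X|$ yields exactly the claimed bound, with the implied constant depending only on $m=r$.

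There is essentially no obstacle here. The only points to verify are that the positivity of $X$ makes the $r$-th root well defined and injective, which matters for even $r$, and that $f(A)$ recovers $X$ exactly rather than a subset or superset. Both follow immediately from taking the positive root of a positive number.
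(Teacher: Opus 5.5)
Your proposal is correct and is exactly the paper's argument: apply Theorem \ref{thm:mainpoly} with $A=X^{1/r}$ and $f(x)=x^r$, so that $f(A)=X$ and $|A|=|X|$. Your remarks on positivity and injectivity of the $r$-th root are the only details the paper leaves implicit.
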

\begin{proof}
    Apply Theorem \ref{thm:mainpoly} with $A=X^{1/r}$ and $f(x)=x^r.$
\end{proof}

\begin{theorem}
    Let $A \subset \mathbb R \setminus \{0 \}$ be a finite set. Let $f:\R\!\setminus\!\{0\}\rightarrow\R$ be defined by $f(x)= x^{-r}$, where $r$ is a positive integer. Then,
    \[
    \max \{|16A|, |13f(A)| \} \gg_r |A|^{\frac{3}{2} +\frac{1}{162}}.
    \]
\end{theorem}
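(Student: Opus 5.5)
We apply Theorem \ref{core} on each of the intervals $(-\infty,0)$ and $(0,\infty)$, where $f(x)=x^{-r}$ is smooth and strictly convex or concave. By the Pigeonhole Principle, at least $|A|/2$ elements of $A$ lie in one of these two intervals; call this interval $I$. Since sumsets of $A\cap I$ are contained in those of $A$, it suffices to verify the derivative condition of Theorem \ref{core} for $f|_I$, with a bound $M$ depending only on $r$.

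Fix $d>0$, so $f_d(x)=(x+d)^{-r}-x^{-r}$ on $I_d$. Here $I_d=(0,\infty)$ when $I=(0,\infty)$, and $I_d=(-\infty,-d)$ when $I=(-\infty,0)$. By \eqref{d2}, \eqref{d3} and \eqref{d4}, the zeroes of the derivatives of $f_d^{-1}$ correspond to zeroes in $x\in I_d$ of the numerators
\[
N_2=f_d^{(2)},\quad N_3=-f_d^{(3)}f_d^{(1)}+3\bigl(f_d^{(2)}\bigr)^2,\quad N_4=10f_d^{(1)}f_d^{(2)}f_d^{(3)}-\bigl(f_d^{(1)}\bigr)^2f_d^{(4)}-15\bigl(f_d^{(2)}\bigr)^3.
\]
Each $f_d^{(k)}$ is a rational function with denominator a power of $x(x+d)$:
\[
f_d^{(k)}(x)=(-1)^k r(r+1)\cdots(r+k-1)\bigl[(x+d)^{-r-k}-x^{-r-k}\bigr].
\]
Multiplying $N_i$ by a suitable power of $x(x+d)$, say $\bigl(x(x+d)\bigr)^{3r+10}$, turns it into a polynomial $P_i(x)$ of degree $O(r)$. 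Such a polynomial has $O(r)$ zeroes unless it vanishes identically. The combined count is then $M\ll_r 1$, uniformly in $d$, and Theorem \ref{core} gives the result.

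The main step is therefore to show that $P_2$, $P_3$ and $P_4$ are not identically zero for any $d>0$. For $P_2$ this is immediate, since $(x+d)^{-r-2}-x^{-r-2}$ never vanishes on $I_d$ (on $(-\infty,-d)$ compare absolute values). For $P_3$ and $P_4$ I would argue by asymptotics rather than by expanding fully. As $x\to 0^+$ (or $x\to -d^-$ on the negative side), the terms $x^{-r-k}$ dominate. Then $f_d^{(k)}\sim -(-1)^k (r)_k\, x^{-r-k}$, where $(r)_k$ denotes the rising factorial, which is precisely the behaviour of the derivatives of $-x^{-r}$. The leading singular term of $N_3$ is then a constant times $x^{-2r-4}$ with coefficient
\[
r^2(r+1)\bigl[-(r+2)+3(r+1)\bigr]=r^2(r+1)(2r+1)\neq 0.
\]
Similarly, the leading singular term of $N_4$ is proportional to $x^{-3r-6}$ with coefficient
\[
r^3(r+1)\bigl[10(r+1)(r+2)-(r+2)(r+3)-15(r+1)^2\bigr]=-r^3(r+1)(6r^2+15r+9),
\]
up to sign, which is nonzero for $r>0$. (This mirrors the computation in Theorem \ref{generalpoly} with $m=-r$: $6m^2-17m+12$ becomes $6r^2+17r+12$, up to the shifts in the factors, and has no positive root.) A nonzero leading asymptotic forces $N_3,N_4\not\equiv 0$.

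On $(-\infty,-d)$ the same argument works using the singularity at $x=-d$, where the $(x+d)^{-r-k}$ terms dominate. The hardest part is getting these leading coefficients exactly right, including signs, but any nonvanishing leading coefficient suffices.
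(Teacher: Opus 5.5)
Your argument is correct and is essentially the paper's: you restrict to a half-line by pigeonhole, apply Theorem \ref{core}, and use a leading-term computation to show that the cleared numerators of \eqref{d2}--\eqref{d4} are nonzero polynomials of degree $O(r)$. There are two differences. First, you read off the leading singular term at the pole $x=0$, where $f_d\sim -x^{-r}$ (equivalently, the constant term of $P_i$); the paper reads off the top coefficient as $x\to\infty$, where $f_d\sim -rd\,x^{-r-1}$. Second, you treat $(-\infty,0)$ directly rather than reflecting $A\cap\R_-$ to the positive side. Your separate check as $x\to -d^-$ is redundant, since $N_3$ and $N_4$ are the same rational functions on both half-lines.

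One arithmetic correction is needed. Your bracket $10(r+1)(r+2)-(r+2)(r+3)-15(r+1)^2$ equals $-(6r^2+5r+1)=-(2r+1)(3r+1)$, not $-(6r^2+15r+9)$. Also, the matching substitution in $6m^2-17m+12$ is $m=1-r$, not $m=-r$, because $-x^{-r}$ plays the role of the degree-$(m-1)$ polynomial $f_d$. Neither slip affects the conclusion.
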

\begin{proof}
    We want to apply Theorem \ref{core}. We proceed as in the proof of Theorem \ref{generalpoly}, by checking the numerators of the second, third and fourth derivative of $f_d^{-1}.$ Firstly, \begin{equation*}
        \begin{split}
            -f_d^{(2)}(x)=\frac{r(r+1)}{\big(x(x+d)\big)^{r+2}}\Big[(x+d)^{r+2}-x^{r+2}\Big],
        \end{split}
    \end{equation*}which is itself a rational function whose numerator is polynomial of degree $r+1$, and hence has finitely many possible zeroes (at most $r+1$). Secondly, we consider the numerator of the third derivative of $f_d^{-1}$, that is \begin{equation*}
            -f_d^{(3)}(x)f_d^{(1)}(x)+3\big(f_d^{(2)}(x)\big)^2=\frac{r^2(r+1)d^2}{\big(x(x+d)\big)^{2r+4}}\Big[(r+1)(r+2)(2r+3)\,x^{2r+2}+l.o.t.\Big],
    \end{equation*}and of the fourth derivative, \begin{equation*}
        \begin{split}
            10&f_d^{(1)}(x)f_d^{(2)}(x)f_d^{(3)}(x)-\big(f_d^{(1)}(x)\big)^2f_d^{(4)}(x)-15\big(f_d^{(2)}(x)\big)^3\\
            &=\frac{r^3(r+1)d^3}{\big(x(x+d)\big)^{3r+6}}\Big[(r+1)^2(r+2)(2r+3)(3r+4)\,x^{3r+3}+l.o.t.\Big].
        \end{split}
    \end{equation*}In both of these cases, the polynomial in the numerator is not identically zero, since $r$ is a positive integer. Lastly, as $0$ is not in $A$, observe that one of the two sign classes contains at least half of $A$. Therefore, take the finite set of reals in Theorem \ref{core} to be $A'=A\cap\R_+$ if most elements of $A$ are positive, or the set containing the additive inverses $A'=-(A\cap\R_-)$ otherwise, together with the strictly convex function $f\big|_{(0,\infty)}$. Taking $A'=-(A\cap\R_-)$ is possible, since $|16A'|=|16(A\cap\R_-)|$ and $|13f(A')|=|13(-1)^{-r}(A\cap\R_-)^{-r}|=|13(A\cap\R_-)^{-r}|.$
\end{proof}

\section*{Acknowledgments} The authors are partially supported by the Austrian Science Fund (FWF) project PAT2559123. We thank Brandon Hanson for asking some nice questions which inspired this research, and for helping us to understand the significance of the Jarn\'{i}k construction with respect to the threshold exponent of $3/2$.

\renewcommand{\bibname}{\Large References}
\bibliography{main}        
\bibliographystyle{plain}  

\end{document}